\newtheorem{theorem}{Theorem}
\theoremstyle{plain}
\newtheorem{definition}{Definition}
\newtheorem{lemma}{Lemma}
\newtheorem{remark}{Remark}
\numberwithin{equation}{section}
\begin{document}
\title[Hermite-Hadamard type inequalities via fractional integrals]{
Hermite-Hadamard type inequalities for mappings whose derivatives are $s-$%
convex in the second sense via fractional integrals}
\author{Erhan SET$^{\blacktriangle }$}
\address{$^{\blacktriangle }$Department of Mathematics, \ Faculty of Science
and Arts, D\"{u}zce University, D\"{u}zce-TURKEY}
\email{erhanset@yahoo.com}
\author{M. Emin OZDEMIR$^{\bigstar }$}
\address{$^{\bigstar }$Atat\"{u}rk University, K.K. Education Faculty,
Department of Mathematics, 25240, Campus, Erzurum, Turkey}
\email{emos@atauni.edu.tr}
\author{M. Zeki Sar\i kaya$^{\blacksquare }$}
\address{$^{\blacksquare }$Department of Mathematics, \ Faculty of Science
and Arts, D\"{u}zce University, D\"{u}zce-TURKEY}
\email{sarikayamz@gmail.com}
\author{Filiz KARAKO\c{C}$^{\clubsuit }$}
\address{$^{\clubsuit }$Department of Mathematics, \ Faculty of Science and
Arts, D\"{u}zce University, D\"{u}zce-TURKEY}
\email{filinz\_41@hotmail.com}
\subjclass[2000]{ 26A33, 26A51, 26D07, 26D10, 26D15.}
\keywords{Hermite-Hadamard type inequality, $s-$convex function, \
Riemann-Liouville fractional integral.}

\begin{abstract}
In this paper we establish Hermite-Hadamard type inequalities for mappings
whose derivatives are $s-$convex in the second sense and concave.
\end{abstract}

\maketitle

\section{\textbf{Introduction}}

Let f:$I\subseteq 
\mathbb{R}
\rightarrow 
\mathbb{R}
$ be a convex function defined on the interval $I$ of real numbers and $%
a,b\in I$ with $a<b.$ Then

\begin{equation}
f\left( \frac{a+b}{2}\right) \leq \frac{1}{b-a}\dint\limits_{a}^{b}f(x)dx%
\leq \frac{f(a)+f(b)}{2}  \label{(1.1)}
\end{equation}%
is known that the Hermite-Hadamard inequality for convex function. Both
inequalities hold in the reserved direction if $f$ is concave. We note that
Hadamard's inequality may be regarded as a refinement of the concept of
convexity and it follows easily from Jensen's inequality. Hadamard's
inequality for convex functions has received renewed attention in recent
years and a remarkable variety of refinements and generalizations have been
found; see, for example see (\cite{1}-\cite{f6}).

\begin{definition}
(\cite{A4}) A function $f:\left[ 0,\infty \right) \rightarrow 
\mathbb{R}
$ is said to be $s-$convex in the second sense if
\end{definition}

\begin{equation*}
f(\lambda x+(1-\lambda )y)\leq \lambda ^{s}f(x)+(1-\lambda )^{s}f(y)
\end{equation*}%
for all $x,y\in \left[ 0,\infty \right) ,\lambda \in \left[ 0,1\right] $ and
for some fixed $s\in \left( 0,1\right] $. This class of $s-$convex functions
is usually denoted by $K_{s}^{2}$.

In (\cite{A2}) Dragomir and Fitzpatrick proved a variant of Hadamard's
ineqality which holds for $s-$convex functions in the second sense:

\begin{theorem}
\label{11} Suppose that $f:\left[ 0,\infty \right) \rightarrow \left[
0,\infty \right) $ is an $s-$convex function in the second sense ,where $%
s\in \left( 0,1\right) $ and let $a,b\in \left[ 0,\infty \right) ,a<b.$ If $%
f^{\prime }\in L^{1}\left( \left[ a,b\right] \right) ,$ then the following
inequalities hold:%
\begin{equation}
2^{s-1}f\left( \frac{a+b}{2}\right) \leq \frac{1}{b-a}\int_{a}^{b}f(x)dx\leq 
\frac{f(a)+f(b)}{2}  \label{1.2}
\end{equation}%
The constant $k=\frac{1}{s+1}$ is the best possible in the second ineqality
in (\ref{1.2})
\end{theorem}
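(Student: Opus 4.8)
The plan is to derive the two inequalities independently, each by inserting an affine parametrization of $[a,b]$ into the defining inequality of $K_s^2$ and then integrating, and finally to confirm the stated value of the best constant by evaluating everything on a single extremal function.

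For the left inequality I would take the definition of $s$-convexity in the second sense with $\lambda=\frac{1}{2}$, which gives $f\left(\frac{u+v}{2}\right)\le 2^{-s}\left(f(u)+f(v)\right)$, and apply it to $u=ta+(1-t)b$ and $v=(1-t)a+tb$, whose arithmetic mean is $\frac{a+b}{2}$ for every $t\in[0,1]$. Thus $f\left(\frac{a+b}{2}\right)\le 2^{-s}\left(f(ta+(1-t)b)+f((1-t)a+tb)\right)$; integrating over $t\in[0,1]$ and using the substitution $x=ta+(1-t)b$, which yields $\int_0^1 f(ta+(1-t)b)\,dt=\int_0^1 f((1-t)a+tb)\,dt=\frac{1}{b-a}\int_a^b f(x)\,dx$, we obtain $f\left(\frac{a+b}{2}\right)\le \frac{2^{1-s}}{b-a}\int_a^b f(x)\,dx$, which is the first inequality after multiplying through by $2^{s-1}$.

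For the second inequality I would parametrize $[a,b]$ directly: with $x=ta+(1-t)b$ the definition of $K_s^2$ gives $f(ta+(1-t)b)\le t^s f(a)+(1-t)^s f(b)$, and integrating over $t\in[0,1]$ together with $\int_0^1 t^s\,dt=\int_0^1(1-t)^s\,dt=\frac{1}{s+1}$ gives $\frac{1}{b-a}\int_a^b f(x)\,dx\le \frac{f(a)+f(b)}{s+1}$, i.e. the upper estimate with the constant $k=\frac{1}{s+1}$ announced in the theorem (note that for $s\in(0,1)$ one has $\frac{1}{s+1}>\frac{1}{2}$, so this is the form matching the optimality claim). To see that $k=\frac{1}{s+1}$ is sharp, I would test the canonical member of $K_s^2$, namely $f(x)=x^s$ — it belongs to $K_s^2$ because $0<s\le 1$ makes $t\mapsto t^s$ subadditive, so $(\lambda u+(1-\lambda)v)^s\le(\lambda u)^s+((1-\lambda)v)^s$ — on $[a,b]=[0,1]$: there $\frac{1}{b-a}\int_a^b f(x)\,dx=\int_0^1 x^s\,dx=\frac{1}{s+1}$ while $f(a)+f(b)=1$, so any admissible constant must be at least $\frac{1}{s+1}$.

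I do not anticipate a genuine obstacle in the two estimates, since each is one application of the definition of $K_s^2$ followed by a routine change of variables; the only step that needs real care is the optimality discussion, where one must make sure the test function $f(x)=x^s$ is admissible on $[0,\infty)$ and that the particular choice $[a,b]=[0,1]$ genuinely pins the constant down to $\frac{1}{s+1}$ rather than merely furnishing an upper bound for it.
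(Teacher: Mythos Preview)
Your argument is correct and is in fact the standard proof due to Dragomir and Fitzpatrick. Note, however, that the paper does not supply its own proof of this theorem: it is quoted in the Introduction as a known result from \cite{A2} and used only as background (specifically, the left inequality is invoked later in the proof of Theorem~\ref{19} to handle the $s$-concave case). So there is no ``paper's proof'' to compare against here.

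One point worth recording: you correctly detected that the right-hand side of \eqref{1.2} as printed, namely $\dfrac{f(a)+f(b)}{2}$, is a typographical slip; the inequality actually proved (and the one for which the constant $k=\dfrac{1}{s+1}$ is best possible) has $\dfrac{f(a)+f(b)}{s+1}$ on the right. Your derivation of both inequalities and your sharpness example $f(x)=x^{s}$ on $[0,1]$ are exactly those of the original source.
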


The following results are proved by M.I.Bhatti et al. (see \cite{A1}).

\begin{theorem}
\label{12} Let $f:I\subseteq 
\mathbb{R}
\rightarrow 
\mathbb{R}
$ be a twice differentiable function on $I^{\circ }$ such that $\left\vert
f^{\prime \prime }\right\vert $ is convex function on $I$. Suppose that $%
a,b\in I^{\circ }$ with $a<b$ and $f^{\prime \prime }\in L\left[ a,b\right] $%
, then the following ineqality for fractional integrals with $\alpha >0$
holds:%
\begin{eqnarray}
&&\left\vert \frac{f(a)+f(b)}{2}-\frac{\Gamma \left( \alpha +1\right) }{%
2\left( b-a\right) ^{\alpha }}\left[ J_{a^{+}}^{\alpha
}f(b)+J_{b^{-}}^{\alpha }f(a)\right] \right\vert  \label{1.3} \\
&\leq &\frac{\left( b-a\right) ^{2}}{\alpha +1}\beta \left( 2,\alpha
+1\right) \left[ \frac{\left\vert f^{\prime \prime }(a)\right\vert
+\left\vert f^{\prime \prime }(b)\right\vert }{2}\right]  \notag
\end{eqnarray}%
where $\beta $ is Euler Beta function.
\end{theorem}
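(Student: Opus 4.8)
The plan is to reduce the claimed fractional inequality to an integral identity and then to estimate the resulting integral using the convexity of $\left\vert f^{\prime \prime }\right\vert $ together with a couple of elementary Beta-function computations. This mirrors the classical argument behind $\frac{f(a)+f(b)}{2}-\frac{1}{b-a}\int_{a}^{b}f(x)\,dx=\frac{(b-a)^{2}}{2}\int_{0}^{1}t(1-t)f^{\prime \prime }(ta+(1-t)b)\,dt$, which is the case $\alpha =1$ of what we want.

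\emph{Step 1 (the integral identity).} Using the substitution $x=(1-t)a+tb$ and the relation $\Gamma (\alpha +1)=\alpha \Gamma (\alpha )$, first rewrite the Riemann--Liouville operators in normalised form:
\[
\frac{\Gamma (\alpha +1)}{2(b-a)^{\alpha }}\left[ J_{a^{+}}^{\alpha }f(b)+J_{b^{-}}^{\alpha }f(a)\right] =\frac{\alpha }{2}\int_{0}^{1}\left[ t^{\alpha -1}+(1-t)^{\alpha -1}\right] f\left( (1-t)a+tb\right) \,dt .
\]
Integrating by parts once, differentiating the factor $f((1-t)a+tb)$, turns the kernel into $t^{\alpha }-(1-t)^{\alpha }$ acting on $f^{\prime }$. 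Integrating by parts a second time, now against the primitive $K(t)=\frac{1-t^{\alpha +1}-(1-t)^{\alpha +1}}{\alpha +1}$ of $t^{\alpha }-(1-t)^{\alpha }$ — chosen so that $K(0)=K(1)=0$, which is precisely what makes the boundary contributions involving $f^{\prime }(a)$ and $f^{\prime }(b)$ disappear — yields
\[
\frac{f(a)+f(b)}{2}-\frac{\Gamma (\alpha +1)}{2(b-a)^{\alpha }}\left[ J_{a^{+}}^{\alpha }f(b)+J_{b^{-}}^{\alpha }f(a)\right] =\frac{(b-a)^{2}}{2(\alpha +1)}\int_{0}^{1}\left( 1-t^{\alpha +1}-(1-t)^{\alpha +1}\right) f^{\prime \prime }\left( (1-t)a+tb\right) \,dt .
\]

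\emph{Step 2 (estimating the right-hand side).} The kernel is nonnegative on $[0,1]$: since $\alpha +1\geq 1$, the map $s\mapsto s^{\alpha +1}$ is convex and hence lies below the chord through $(0,0)$ and $(1,1)$, so $t^{\alpha +1}+(1-t)^{\alpha +1}\leq t+(1-t)=1$. I can therefore pass the absolute value inside the integral with the kernel unchanged, then apply the triangle inequality together with the convexity of $\left\vert f^{\prime \prime }\right\vert $ on $I$ (the hypothesis of the theorem), namely $\left\vert f^{\prime \prime }((1-t)a+tb)\right\vert \leq (1-t)\left\vert f^{\prime \prime }(a)\right\vert +t\left\vert f^{\prime \prime }(b)\right\vert $. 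This leaves the two integrals $\int_{0}^{1}t\left( 1-t^{\alpha +1}-(1-t)^{\alpha +1}\right) \,dt$ and $\int_{0}^{1}(1-t)\left( 1-t^{\alpha +1}-(1-t)^{\alpha +1}\right) \,dt$, which are equal by the change of variable $t\mapsto 1-t$. Each is elementary and, once written through $\int_{0}^{1}t^{p-1}(1-t)^{q-1}\,dt=\beta (p,q)$, supplies the constant standing in front of $\frac{\left\vert f^{\prime \prime }(a)\right\vert +\left\vert f^{\prime \prime }(b)\right\vert }{2}$; assembling it with the factor $\frac{(b-a)^{2}}{2(\alpha +1)}$ and the two equal weights gives the asserted bound involving the Euler Beta function.

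\emph{The main obstacle.} The delicate part is Step 1. One has to carry out the two integrations by parts with no sign slips, keep the normalising constant $\alpha =\Gamma (\alpha +1)/\Gamma (\alpha )$ under control, and — crucially — fix the additive constant in the second primitive so that \emph{both} boundary terms vanish; any other choice leaves spurious $f^{\prime }(a)$, $f^{\prime }(b)$ terms and the identity no longer reduces to the stated form. Once the identity is in hand, the nonnegativity of the kernel, the convexity estimate, and the Beta-integral bookkeeping in Step 2 are all routine.
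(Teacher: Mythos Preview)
Your approach is essentially the paper's approach. The paper does not prove Theorem~\ref{12} directly (it is quoted from \cite{A1}); its route is Lemma~\ref{L1} followed by the convexity estimate, which is exactly what you do. Your Step~1 identity is Lemma~\ref{L1} in symmetrised form: in the paper's integrand $t(1-t^{\alpha})\bigl[f''(ta+(1-t)b)+f''((1-t)a+tb)\bigr]$, substituting $t\mapsto 1-t$ in the first summand and adding gives the single kernel $t(1-t^{\alpha})+(1-t)(1-(1-t)^{\alpha})=1-t^{\alpha+1}-(1-t)^{\alpha+1}$, which is your $K$. Step~2 is then the same convexity-plus-Beta-integral bookkeeping as in the proof of Theorem~\ref{16} with $s=1$ (cf.\ Remark~\ref{R1}).

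One point you should not leave implicit: you assert that the computed integral ``supplies the constant'' in the statement, but if you actually evaluate it you get
\[
\int_{0}^{1}t\bigl(1-t^{\alpha+1}-(1-t)^{\alpha+1}\bigr)\,dt=\frac{1}{2}-\frac{1}{\alpha+3}-\beta(2,\alpha+2)=\frac{\alpha}{2(\alpha+2)},
\]
so the final coefficient in front of $\tfrac{|f''(a)|+|f''(b)|}{2}$ is $\dfrac{\alpha}{2(\alpha+1)(\alpha+2)}$, whereas the statement records $\dfrac{\beta(2,\alpha+1)}{\alpha+1}=\dfrac{1}{(\alpha+1)^{2}(\alpha+2)}$. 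These agree only at $\alpha=1$. This is precisely the constant that the paper's own Theorem~\ref{16} with $s=1$ produces as well, so the discrepancy lies in the quoted form of Theorem~\ref{12}, not in your argument; but you should state the constant you actually obtain rather than claim it matches.
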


\begin{theorem}
\label{13} Let $f:I\subseteq 
\mathbb{R}
\rightarrow 
\mathbb{R}
$ be a twice differentiable function on $I^{\circ }$. Assume that $p\in 
\mathbb{R}
,p>1$ such that $\left\vert f^{\prime \prime }\right\vert ^{\frac{p}{p-1}}$
is convex function on $I$. Suppose that $a,b\in I^{\circ }$ with $a<b$ and $%
f^{\prime \prime }\in L\left[ a,b\right] ,$ then the following ineqality for
fractional integrals holds:%
\begin{eqnarray}
&&\left\vert \frac{f(a)+f(b)}{2}-\frac{\Gamma \left( \alpha +1\right) }{%
2\left( b-a\right) ^{\alpha }}\left[ J_{a^{+}}^{\alpha
}f(b)+J_{b^{-}}^{\alpha }f(a)\right] \right\vert  \label{1.4} \\
&\leq &\frac{\left( b-a\right) ^{2}}{\alpha +1}\beta ^{\frac{1}{p}}\left(
p+1,\alpha p+1\right) \left( \frac{\left\vert f^{\prime \prime
}(a)\right\vert ^{q}+\left\vert f^{\prime \prime }(b)\right\vert ^{q}}{2}%
\right) ^{\frac{1}{q}}  \notag
\end{eqnarray}%
where $\beta $ is Euler Beta function.
\end{theorem}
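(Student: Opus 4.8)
The plan is to follow the route that proves Theorem \ref{12}: reduce the left-hand side of (\ref{1.4}) to an integral over $[0,1]$ weighted by $f^{\prime\prime}$, estimate that integral by the H\"{o}lder inequality, and then use the convexity of $\left\vert f^{\prime\prime}\right\vert ^{q}$, where $q=\frac{p}{p-1}$. The starting point is the integral identity behind Theorem \ref{12}: writing $J_{a^{+}}^{\alpha}f(b)$ and $J_{b^{-}}^{\alpha}f(a)$ as integrals over $[0,1]$ by means of the substitution $x=ta+(1-t)b$ (and using $\Gamma(\alpha+1)=\alpha\Gamma(\alpha)$), and then integrating by parts twice, one obtains
\[
\frac{f(a)+f(b)}{2}-\frac{\Gamma\left(\alpha+1\right)}{2\left(b-a\right)^{\alpha}}\left[J_{a^{+}}^{\alpha}f(b)+J_{b^{-}}^{\alpha}f(a)\right]=\frac{\left(b-a\right)^{2}}{2}\int_{0}^{1}K_{\alpha}(t)\,f^{\prime\prime}(ta+(1-t)b)\,dt,
\]
with $K_{\alpha}(t)=\frac{1-t^{\alpha+1}-(1-t)^{\alpha+1}}{\alpha+1}\geq 0$ on $[0,1]$. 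In the two integrations by parts the boundary terms are controlled by $K_{\alpha}(0)=K_{\alpha}(1)=0$ and $K_{\alpha}^{\prime}(0)=-K_{\alpha}^{\prime}(1)=1$, the latter being precisely what produces $\frac{f(a)+f(b)}{2}$; for $\alpha=1$ one recovers $K_{1}(t)=t(1-t)$, the classical trapezoid kernel.

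Next I would take absolute values, move them inside the integral, and apply the H\"{o}lder inequality with the conjugate exponents $p$ and $q=\frac{p}{p-1}$:
\[
\left\vert \frac{f(a)+f(b)}{2}-\frac{\Gamma\left(\alpha+1\right)}{2\left(b-a\right)^{\alpha}}\left[J_{a^{+}}^{\alpha}f(b)+J_{b^{-}}^{\alpha}f(a)\right]\right\vert \leq \frac{\left(b-a\right)^{2}}{2}\left(\int_{0}^{1}K_{\alpha}(t)^{p}\,dt\right)^{\frac{1}{p}}\left(\int_{0}^{1}\left\vert f^{\prime\prime}(ta+(1-t)b)\right\vert ^{q}\,dt\right)^{\frac{1}{q}}.
\]
By hypothesis $\left\vert f^{\prime\prime}\right\vert ^{q}$ is convex on $I$, so $\left\vert f^{\prime\prime}(ta+(1-t)b)\right\vert ^{q}\leq t\left\vert f^{\prime\prime}(a)\right\vert ^{q}+(1-t)\left\vert f^{\prime\prime}(b)\right\vert ^{q}$, and integrating in $t$ gives $\int_{0}^{1}\left\vert f^{\prime\prime}(ta+(1-t)b)\right\vert ^{q}\,dt\leq \frac{\left\vert f^{\prime\prime}(a)\right\vert ^{q}+\left\vert f^{\prime\prime}(b)\right\vert ^{q}}{2}$, which is the last factor in (\ref{1.4}).

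It then remains to handle $\int_{0}^{1}K_{\alpha}(t)^{p}\,dt$ and to recognise the Euler Beta function. Estimating the kernel pointwise by a majorant of the form $K_{\alpha}(t)\leq c\,t(1-t)^{\alpha}$ — which has the right shape $K_{\alpha}(t)\sim t$ near $t=0$ — one obtains
\[
\int_{0}^{1}K_{\alpha}(t)^{p}\,dt\leq c^{p}\int_{0}^{1}t^{p}(1-t)^{\alpha p}\,dt=c^{p}\,\beta\left(p+1,\alpha p+1\right).
\]
Taking $p$-th roots and absorbing $c$ into the $\frac{\left(b-a\right)^{2}}{2}$ standing in front produces the prefactor $\frac{\left(b-a\right)^{2}}{\alpha+1}\beta^{\frac{1}{p}}\left(p+1,\alpha p+1\right)$; together with the estimate of the previous paragraph this yields (\ref{1.4}).

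The identity, the H\"{o}lder inequality and the convexity estimate are all routine; the step that needs genuine care — and which I expect to be the main obstacle — is the treatment of $\int_{0}^{1}K_{\alpha}(t)^{p}\,dt$, since a direct expansion of $\left[1-t^{\alpha+1}-(1-t)^{\alpha+1}\right]^{p}$ does not collapse to a single Beta value. One must choose the pointwise majorant of $K_{\alpha}$ exactly right so that $\beta\left(p+1,\alpha p+1\right)$ and the constant $\frac{1}{\alpha+1}$ come out, and verify the range of $\alpha$ for which that estimate is valid.
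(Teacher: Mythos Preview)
Your integral identity is correct and is equivalent to Lemma~\ref{L1}: substituting $t\mapsto 1-t$ in the term containing $f^{\prime\prime}((1-t)a+tb)$ and adding, the combined kernel is exactly your $K_{\alpha}(t)$. The H\"older step and the convexity estimate $\int_{0}^{1}|f^{\prime\prime}(ta+(1-t)b)|^{q}\,dt\le\tfrac{1}{2}\bigl(|f^{\prime\prime}(a)|^{q}+|f^{\prime\prime}(b)|^{q}\bigr)$ are also exactly what the paper does (the paper obtains Theorem~\ref{13} as the case $s=1$ of Theorem~\ref{17}; see Remark~\ref{R2}).

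The divergence --- and the gap you yourself flag --- is in the kernel estimate. The paper does \emph{not} merge the two $f^{\prime\prime}$ terms into a single integral. It keeps the two-term form of Lemma~\ref{L1}, applies H\"older separately to each summand, and is left with $\bigl(\int_{0}^{1} t^{p}(1-t^{\alpha})^{p}\,dt\bigr)^{1/p}$. This is then bounded via the single pointwise inequality $1-t^{\alpha}\le(1-t)^{\alpha}$, giving $\int_{0}^{1} t^{p}(1-t)^{\alpha p}\,dt=\beta(p+1,\alpha p+1)$ directly; the factor $\tfrac{1}{\alpha+1}$ is already sitting in front from Lemma~\ref{L1}, so no undetermined constant $c$ ever appears. (That pointwise inequality is in fact valid only for $0<\alpha\le 1$, a restriction neither the source nor the present paper makes explicit.)

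By contrast your combined kernel is symmetric, $K_{\alpha}(t)=K_{\alpha}(1-t)$, so near $t=1$ it behaves like $1-t$, whereas your proposed majorant $t(1-t)^{\alpha}$ behaves like $(1-t)^{\alpha}$. For $\alpha>1$ the ratio $K_{\alpha}(t)/\bigl[t(1-t)^{\alpha}\bigr]$ therefore blows up as $t\to 1$, and no finite $c$ can work; even for $\alpha\le 1$ the correct constant is not transparent from this route. The remedy is simply to retain the two halves of the identity separately, as in Lemma~\ref{L1}: then the kernel to be controlled is $t(1-t^{\alpha})$ rather than $K_{\alpha}$, and the Beta function appears in one line.
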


\begin{theorem}
\label{14} Let $f:I\subseteq 
\mathbb{R}
\rightarrow 
\mathbb{R}
$ be a twice differentiable function on $I^{\circ }$. Assume that $q\geq 1$
such that $\left\vert f^{\prime \prime }\right\vert ^{q}$ is convex function
on $I.$Suppose that $a,b\in I^{\circ }$ with $a<b$ and $f^{\prime \prime
}\in L\left[ a,b\right] ,$then the following ineqality for fractional
integrals holds:%
\begin{eqnarray}
&&\left\vert \frac{f(a)+f(b)}{2}-\frac{\Gamma \left( \alpha +1\right) }{%
2\left( b-a\right) ^{\alpha }}\left[ J_{a^{+}}^{\alpha
}f(b)+J_{b^{-}}^{\alpha }f(a)\right] \right\vert  \label{1.5} \\
&\leq &\frac{\alpha \left( b-a\right) ^{2}}{4\left( \alpha +1\right) \left(
\alpha +2\right) }\left[ 
\begin{array}{c}
\left( \frac{2\alpha +4}{3\alpha +9}\left\vert f^{\prime \prime
}(a)\right\vert ^{q}+\frac{\alpha +5}{3\alpha +9}\left\vert f^{\prime \prime
}(b)\right\vert ^{q}\right) ^{\frac{1}{q}} \\ 
+\left( \frac{\alpha +5}{3\alpha +9}\left\vert f^{\prime \prime
}(a)\right\vert ^{q}+\frac{2\alpha +4}{3\alpha +9}\left\vert f^{\prime
\prime }(b)\right\vert ^{q}\right) ^{\frac{1}{q}}%
\end{array}%
\right]  \notag
\end{eqnarray}
\end{theorem}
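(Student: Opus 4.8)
The plan is to reduce (\ref{1.5}) to a second-order integral identity, exactly as Theorems \ref{12} and \ref{13} are. First I would record the representation
\begin{eqnarray*}
&&\frac{f(a)+f(b)}{2}-\frac{\Gamma \left( \alpha +1\right) }{2\left(
b-a\right) ^{\alpha }}\left[ J_{a^{+}}^{\alpha }f(b)+J_{b^{-}}^{\alpha }f(a)
\right] \\
&=&\frac{\left( b-a\right) ^{2}}{2\left( \alpha +1\right) }\left[
\int_{0}^{1}\left( t-t^{\alpha +1}\right) f^{\prime \prime }\left(
ta+(1-t)b\right) \,dt+\int_{0}^{1}\left( t-t^{\alpha +1}\right) f^{\prime
\prime }\left( (1-t)a+tb\right) \,dt\right] ,
\end{eqnarray*}
which is obtained by writing $J_{a^{+}}^{\alpha }f(b)$ and $J_{b^{-}}^{\alpha }f(a)$ as integrals over $[a,b]$, substituting $x=ta+(1-t)b$, integrating by parts twice against $f^{\prime }$ and then $f^{\prime \prime }$ (the boundary terms drop because the kernel $\frac{1-(1-t)^{\alpha +1}-t^{\alpha +1}}{\alpha +1}$ vanishes at $t=0$ and $t=1$), and finally using $1-(1-t)^{\alpha +1}-t^{\alpha +1}=\left( t-t^{\alpha +1}\right) +\left( (1-t)-(1-t)^{\alpha +1}\right) $ together with the change of variable $t\mapsto 1-t$ in the second summand.

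Since $t-t^{\alpha +1}=t\left( 1-t^{\alpha }\right) \geq 0$ on $[0,1]$, taking absolute values and using the triangle inequality gives
\begin{eqnarray*}
&&\left\vert \frac{f(a)+f(b)}{2}-\frac{\Gamma \left( \alpha +1\right) }{
2\left( b-a\right) ^{\alpha }}\left[ J_{a^{+}}^{\alpha }f(b)+J_{b^{-}}^{\alpha
}f(a)\right] \right\vert \\
&\leq &\frac{\left( b-a\right) ^{2}}{2\left( \alpha +1\right) }\left[
\int_{0}^{1}\left( t-t^{\alpha +1}\right) \left\vert f^{\prime \prime }\left(
ta+(1-t)b\right) \right\vert \,dt+\int_{0}^{1}\left( t-t^{\alpha +1}\right)
\left\vert f^{\prime \prime }\left( (1-t)a+tb\right) \right\vert \,dt\right] .
\end{eqnarray*}
To each of the two integrals I would apply the power-mean inequality with weight $t-t^{\alpha +1}$ and exponents $1-\frac{1}{q}$, $\frac{1}{q}$, using $\int_{0}^{1}\left( t-t^{\alpha +1}\right) dt=\frac{1}{2}-\frac{1}{\alpha +2}=\frac{\alpha }{2(\alpha +2)}$; for $q=1$ this step is empty and one just recovers Theorem \ref{12}. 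Then the convexity of $\left\vert f^{\prime \prime }\right\vert ^{q}$ in the forms $\left\vert f^{\prime \prime }\left( ta+(1-t)b\right) \right\vert ^{q}\leq t\left\vert f^{\prime \prime }(a)\right\vert ^{q}+(1-t)\left\vert f^{\prime \prime }(b)\right\vert ^{q}$ and $\left\vert f^{\prime \prime }\left( (1-t)a+tb\right) \right\vert ^{q}\leq (1-t)\left\vert f^{\prime \prime }(a)\right\vert ^{q}+t\left\vert f^{\prime \prime }(b)\right\vert ^{q}$ reduces everything to the two elementary moments
\[
\int_{0}^{1}\left( t^{2}-t^{\alpha +2}\right) dt=\frac{\alpha }{3(\alpha +3)}
,\qquad \int_{0}^{1}\left( t-t^{\alpha +1}\right) (1-t)\,dt=\frac{\alpha
(\alpha +5)}{6(\alpha +2)(\alpha +3)}.
\]

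Putting the pieces together, the first integral is bounded by $\frac{\alpha }{2(\alpha +2)}\left( \frac{2\alpha +4}{3\alpha +9}\left\vert f^{\prime \prime }(a)\right\vert ^{q}+\frac{\alpha +5}{3\alpha +9}\left\vert f^{\prime \prime }(b)\right\vert ^{q}\right) ^{1/q}$ and the second by the same quantity with $a$ and $b$ interchanged; multiplying by $\frac{\left( b-a\right) ^{2}}{2(\alpha +1)}$ gives exactly (\ref{1.5}). The only delicate point is this last bookkeeping: one must factor $\left( \frac{\alpha }{2(\alpha +2)}\right) ^{1-1/q}\left( \frac{\alpha }{2(\alpha +2)}\right) ^{1/q}=\frac{\alpha }{2(\alpha +2)}$ out in front and observe that $2(\alpha +2)+(\alpha +5)=3(\alpha +3)$, so that $\frac{2\alpha +4}{3\alpha +9}$ and $\frac{\alpha +5}{3\alpha +9}$ really are complementary weights — a check that is in any case forced by the $q=1$ case collapsing to Theorem \ref{12}. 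No step beyond producing and exploiting the identity is genuinely hard.
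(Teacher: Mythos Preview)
Your proposal is correct and follows essentially the same route as the paper: the paper obtains Theorem~\ref{14} as the $s=1$ case of Theorem~\ref{18}, whose proof uses Lemma~\ref{L1} (the identity you rederive), then the power-mean inequality with weight $t(1-t^{\alpha})$, then convexity of $|f''|^{q}$, and finally the same moment computations $\int_{0}^{1}t(1-t^{\alpha})\,dt=\frac{\alpha}{2(\alpha+2)}$, $\int_{0}^{1}t^{2}(1-t^{\alpha})\,dt=\frac{\alpha}{3(\alpha+3)}$, $\int_{0}^{1}t(1-t^{\alpha})(1-t)\,dt=\frac{\alpha(\alpha+5)}{6(\alpha+2)(\alpha+3)}$. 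The only extra content in your write-up is the sketch of how Lemma~\ref{L1} itself is derived, which the paper simply cites from \cite{A1}; one small caveat is that your aside ``for $q=1$ one just recovers Theorem~\ref{12}'' is not literally true as stated in the paper (the constant $\beta(2,\alpha+1)$ in (\ref{1.3}) does not equal $\frac{\alpha}{2(\alpha+2)}$), but this does not affect the proof of (\ref{1.5}).
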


\begin{theorem}
\label{15} Let $f:I\subseteq 
\mathbb{R}
\rightarrow 
\mathbb{R}
$ be a twice differentiable function on $I^{\circ }$. Assume that $p\in 
\mathbb{R}
,p>1$ with $q=\frac{p}{p-1}$ such that $\left\vert f^{\prime \prime
}\right\vert ^{q}$ is concave function on $I.$Suppose that $a,b\in I^{\circ
} $ with $a<b$ and $f^{\prime \prime }\in L\left[ a,b\right] ,$then the
following ineqality for fractional integrals holds:%
\begin{eqnarray}
&&\left\vert \frac{f(a)+f(b)}{2}-\frac{\Gamma \left( \alpha +1\right) }{%
2\left( b-a\right) ^{\alpha }}\left[ J_{a^{+}}^{\alpha
}f(b)+J_{b^{-}}^{\alpha }f(a)\right] \right\vert  \label{1.6} \\
&\leq &\frac{\left( b-a\right) ^{2}}{\alpha +1}\beta ^{\frac{1}{p}}\left(
p+1,\alpha p+1\right) \left\vert f^{\prime \prime }\left( \frac{a+b}{2}%
\right) \right\vert  \notag
\end{eqnarray}%
where $\beta $ is Euler Beta function.
\end{theorem}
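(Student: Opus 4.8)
The plan is to run the same scheme that yields Theorem~\ref{13}, the only change being that the $L^{q}$-mean of $\left\vert f^{\prime \prime }\right\vert $ is now controlled by a single value of $f^{\prime \prime }$ at the midpoint: since $\left\vert f^{\prime \prime }\right\vert ^{q}$ is assumed \emph{concave}, Jensen's inequality --- equivalently, the left inequality of \eqref{(1.1)} read in the reverse direction --- replaces the convexity estimate used in Theorem~\ref{13}.

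Set
\[
T:=\frac{f(a)+f(b)}{2}-\frac{\Gamma (\alpha +1)}{2(b-a)^{\alpha }}\left[ J_{a^{+}}^{\alpha }f(b)+J_{b^{-}}^{\alpha }f(a)\right] .
\]
First I would recall the integral identity used in \cite{A1} to prove \eqref{1.3}--\eqref{1.5}: starting from the first--derivative fractional Hermite--Hadamard representation of $T$ and integrating by parts once more, one obtains
\[
T=\frac{(b-a)^{2}}{2}\int_{0}^{1}\lambda (t)\,f^{\prime \prime }(ta+(1-t)b)\,dt ,\qquad \lambda (t)=\frac{1-(1-t)^{\alpha +1}-t^{\alpha +1}}{\alpha +1},
\]
where $\lambda (t)\geq 0$ on $[0,1]$ (indeed $(1-t)^{\alpha +1}+t^{\alpha +1}<(1-t)+t=1$ there). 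Taking absolute values inside the integral and applying H\"older's inequality with conjugate exponents $p$ and $q=\frac{p}{p-1}$ gives
\[
\left\vert T\right\vert \leq \frac{(b-a)^{2}}{2}\left( \int_{0}^{1}\lambda (t)^{p}\,dt\right) ^{1/p}\left( \int_{0}^{1}\left\vert f^{\prime \prime }(ta+(1-t)b)\right\vert ^{q}dt\right) ^{1/q}.
\]

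It remains to estimate the two factors. The first is a matter of expressing $\int_{0}^{1}\lambda (t)^{p}\,dt$ through Euler Beta integrals (using $\int_{0}^{1}t^{\mu }(1-t)^{\nu }\,dt=\beta (\mu +1,\nu +1)$) and simplifying; carried out exactly as in Theorem~\ref{13}, this produces the coefficient $\frac{(b-a)^{2}}{\alpha +1}\beta ^{1/p}(p+1,\alpha p+1)$ appearing in \eqref{1.4}. For the second factor, the substitution $u=ta+(1-t)b$ yields
\[
\int_{0}^{1}\left\vert f^{\prime \prime }(ta+(1-t)b)\right\vert ^{q}dt=\frac{1}{b-a}\int_{a}^{b}\left\vert f^{\prime \prime }(u)\right\vert ^{q}du ,
\]
and since $\left\vert f^{\prime \prime }\right\vert ^{q}$ is concave on $I$ the left inequality in \eqref{(1.1)}, read in the reverse direction, gives $\frac{1}{b-a}\int_{a}^{b}\left\vert f^{\prime \prime }(u)\right\vert ^{q}du\leq \left\vert f^{\prime \prime }\left( \frac{a+b}{2}\right) \right\vert ^{q}$. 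Taking $q$-th roots and combining the two estimates produces \eqref{1.6}.

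The only real obstacle is the first factor: one must handle $\left( \int_{0}^{1}\lambda (t)^{p}\,dt\right) ^{1/p}$ so that it collapses to the stated Beta-function coefficient $\frac{1}{\alpha +1}\beta ^{1/p}(p+1,\alpha p+1)$ --- the same bookkeeping that underlies Theorem~\ref{13}. Once that is in place the rest is mechanical: H\"older needs only $q=\frac{p}{p-1}$, and concavity of $\left\vert f^{\prime \prime }\right\vert ^{q}$ (which, being finite on the interval $I$, is automatically integrable on $[a,b]$) is exactly what licenses the reversed Hermite--Hadamard step.
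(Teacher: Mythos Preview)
Your argument is essentially the paper's own (Theorem~\ref{19} specialised to $s=1$; cf.\ Remark~\ref{r4}): the second--derivative identity of Lemma~\ref{L1}, H\"older, and the reversed left Hermite--Hadamard inequality applied to the concave function $|f''|^{q}$. The only difference is presentational: you collapse Lemma~\ref{L1} into a single integral with the symmetric kernel $\lambda(t)=\dfrac{1-t^{\alpha+1}-(1-t)^{\alpha+1}}{\alpha+1}$, whereas the paper keeps the two-term form with kernel $\dfrac{t(1-t^{\alpha})}{\alpha+1}$ and applies H\"older to each summand separately (the factor $2$ from the two terms is what turns $\frac{1}{2(\alpha+1)}$ into $\frac{1}{\alpha+1}$).

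One point to tighten. For non-integer $p$, $\int_{0}^{1}\lambda(t)^{p}\,dt$ is \emph{not} expressible as a Beta integral, so ``expressing through Euler Beta integrals and simplifying'' cannot be carried out literally on your symmetrised kernel. The bookkeeping in Theorem~\ref{13} (reproduced here inside the proof of Theorem~\ref{17}) actually uses the \emph{inequality} $\int_{0}^{1}t^{p}(1-t^{\alpha})^{p}\,dt\le\int_{0}^{1}t^{p}(1-t)^{\alpha p}\,dt=\beta(p+1,\alpha p+1)$, applied to the unsymmetrised kernel. To recover this from your formulation, either revert to the two-term identity before invoking H\"older, or insert a Minkowski step:
\[
\|\lambda\|_{L^{p}}\le\frac{1}{\alpha+1}\Bigl(\bigl\|t(1-t^{\alpha})\bigr\|_{L^{p}}+\bigl\|(1-t)(1-(1-t)^{\alpha})\bigr\|_{L^{p}}\Bigr)=\frac{2}{\alpha+1}\bigl\|t(1-t^{\alpha})\bigr\|_{L^{p}},
\]
after which the paper's Beta bound gives exactly the coefficient in \eqref{1.6}.
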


We will give some necessary definitions and mathematical preliminaries of
fractional calculus theory which are used further this paper.

\begin{definition}
Let $f\in L\left[ a,b\right] $. The Reimann-Liouville integrals $%
J_{a^{+}}^{\alpha }f(x)$ and $J_{b^{-}}^{\alpha }f(x)$ of order $\alpha >0$
with $\alpha \geq 0$ are defined by
\end{definition}

\begin{equation*}
J_{a^{+}}^{\alpha }f(x)=\frac{1}{\Gamma (\alpha )}\int_{a}^{x}(x-t)^{\alpha
-1}f(t)dt,x>\alpha
\end{equation*}%
and

\begin{equation*}
J_{b^{-}}^{\alpha }f(x)=\frac{1}{\Gamma (\alpha )}\int_{x}^{b}(t-x)^{\alpha
-1}f(t)dt,x<b
\end{equation*}%
respectively. Where $\Gamma (\alpha )=\int_{0}^{\infty }e^{-t}u^{\alpha
-1}du $ is the Gamma function and $J_{a^{+}}^{0}f(x)=J_{b^{-}}^{0}f(x)=f(x).$

In the case of $\alpha =1$ the fractional integral reduces to the classical
integral.

For some recent results connected with fractional integral ineqalities, (see 
\cite{r1}-\cite{r10}).

In this paper, we establish fractional integral ineqalities of
Hermite-Hadamard type for mappings whose derivatives are $s-$convex and
concave.

\section{\textbf{\protect\bigskip Main Results}}

In order to prove our main theorems we need the following lemma (see \cite%
{A1}).

\begin{lemma}
\label{L1} Let $f:I\subseteq 
\mathbb{R}
\rightarrow 
\mathbb{R}
$ be a twice differentiable function on $I^{\circ }$,the interior of $I$.
Assume that $a,b\in I^{\circ }$ with $a<b$ and $f^{^{\prime \prime }}\in L%
\left[ a,b\right] $, then the following identity for fractional integral
with $\alpha >0$ holds:
\end{lemma}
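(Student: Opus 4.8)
\smallskip

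\noindent\textbf{Proof proposal.} The identity asserted in the lemma is expected to read
\begin{equation*}
\frac{f(a)+f(b)}{2}-\frac{\Gamma(\alpha+1)}{2(b-a)^{\alpha}}\left[J_{a^{+}}^{\alpha}f(b)+J_{b^{-}}^{\alpha}f(a)\right]=\frac{(b-a)^{2}}{2(\alpha+1)}\int_{0}^{1}\left(1-t^{\alpha+1}-(1-t)^{\alpha+1}\right)f^{\prime\prime}(ta+(1-t)b)\,dt ,
\end{equation*}
and the plan is to verify it by starting from the right-hand side and integrating by parts twice. Write that side as $\frac{(b-a)^{2}}{2(\alpha+1)}\left(I_{0}-I_{1}-I_{2}\right)$, where $I_{0}=\int_{0}^{1}f^{\prime\prime}(ta+(1-t)b)\,dt$, $I_{1}=\int_{0}^{1}t^{\alpha+1}f^{\prime\prime}(ta+(1-t)b)\,dt$, and $I_{2}=\int_{0}^{1}(1-t)^{\alpha+1}f^{\prime\prime}(ta+(1-t)b)\,dt$. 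Throughout I would use the chain-rule facts $\frac{d}{dt}f^{\prime}(ta+(1-t)b)=(a-b)f^{\prime\prime}(ta+(1-t)b)$ and $\frac{d}{dt}f(ta+(1-t)b)=(a-b)f^{\prime}(ta+(1-t)b)$.

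First, a single integration by parts gives $I_{0}=\frac{f^{\prime}(b)-f^{\prime}(a)}{b-a}$. For $I_{1}$ the first integration by parts yields the boundary value $\frac{f^{\prime}(a)}{a-b}$ together with $-\frac{\alpha+1}{a-b}\int_{0}^{1}t^{\alpha}f^{\prime}(ta+(1-t)b)\,dt$; a second integration by parts on that integral yields $-\frac{(\alpha+1)f(a)}{(b-a)^{2}}$ together with $\frac{\alpha(\alpha+1)}{(b-a)^{2}}\int_{0}^{1}t^{\alpha-1}f(ta+(1-t)b)\,dt$. Now the change of variables $x=ta+(1-t)b$ (so $t=\frac{b-x}{b-a}$, $dt=-\frac{dx}{b-a}$, and $t:0\to1$ corresponds to $x:b\to a$) turns this last integral into $\frac{1}{(b-a)^{\alpha}}\int_{a}^{b}(b-x)^{\alpha-1}f(x)\,dx=\frac{\Gamma(\alpha)}{(b-a)^{\alpha}}J_{a^{+}}^{\alpha}f(b)$. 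The symmetric computation for $I_{2}$ (with $(1-t)^{\alpha+1}$ in place of $t^{\alpha+1}$, so that $1-t=\frac{x-a}{b-a}$) gives $I_{2}=-\frac{f^{\prime}(b)}{a-b}-\frac{(\alpha+1)f(b)}{(b-a)^{2}}+\frac{\alpha(\alpha+1)}{(b-a)^{2}}\cdot\frac{\Gamma(\alpha)}{(b-a)^{\alpha}}J_{b^{-}}^{\alpha}f(a)$.

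Adding, the $f^{\prime}$--boundary terms of $I_{1}$ and $I_{2}$ combine into $\frac{f^{\prime}(a)-f^{\prime}(b)}{a-b}=I_{0}$, so that $I_{0}-I_{1}-I_{2}=\frac{(\alpha+1)\left(f(a)+f(b)\right)}{(b-a)^{2}}-\frac{\alpha(\alpha+1)\Gamma(\alpha)}{(b-a)^{\alpha+2}}\left[J_{a^{+}}^{\alpha}f(b)+J_{b^{-}}^{\alpha}f(a)\right]$; multiplying by $\frac{(b-a)^{2}}{2(\alpha+1)}$ and using $\alpha(\alpha+1)\Gamma(\alpha)=\Gamma(\alpha+2)=(\alpha+1)\Gamma(\alpha+1)$ gives exactly the left-hand side of the identity. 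The only delicate point is the bookkeeping of signs: each integration by parts contributes a factor $\frac{1}{a-b}<0$, the change of variables reverses the orientation of $[a,b]$, and the kernels must be matched to the correct one-sided Riemann--Liouville integral ($J_{a^{+}}^{\alpha}$ carrying the kernel $(b-x)^{\alpha-1}$ and $J_{b^{-}}^{\alpha}$ the kernel $(x-a)^{\alpha-1}$). The hypotheses that $f$ is twice differentiable on $I^{\circ}$ with $f^{\prime\prime}\in L[a,b]$ are precisely what is needed to legitimise the two integrations by parts, so there is no genuine analytic obstacle; the argument is a finite computation. As a sanity check, $\alpha=1$ collapses the identity to the classical trapezoid remainder $\frac{f(a)+f(b)}{2}-\frac{1}{b-a}\int_{a}^{b}f(x)\,dx=\frac{(b-a)^{2}}{2}\int_{0}^{1}t(1-t)f^{\prime\prime}(ta+(1-t)b)\,dt$.
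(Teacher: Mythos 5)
Your proposal is correct. Note first that the paper gives no proof of this lemma at all (it is quoted from reference \cite{A1}), so there is nothing internal to compare against; and note second that the identity you wrote down, with kernel $1-t^{\alpha +1}-(1-t)^{\alpha +1}$ acting on $f^{\prime \prime }(ta+(1-t)b)$ alone, is exactly the paper's identity (\ref{2.1}) after the substitution $t\mapsto 1-t$ in the second summand, since $t(1-t^{\alpha })+(1-t)\left( 1-(1-t)^{\alpha }\right) =1-t^{\alpha +1}-(1-t)^{\alpha +1}$. Your verification --- two integrations by parts on each of $I_{1}$ and $I_{2}$, cancellation of the $f^{\prime }$ boundary terms against $I_{0}$, and the change of variables $x=ta+(1-t)b$ identifying $\int_{0}^{1}t^{\alpha -1}f(ta+(1-t)b)\,dt$ with $\frac{\Gamma (\alpha )}{(b-a)^{\alpha }}J_{a^{+}}^{\alpha }f(b)$ and its mirror with $J_{b^{-}}^{\alpha }f(a)$ --- is the standard route taken in \cite{A1} and checks out, including the final simplification $\alpha \Gamma (\alpha )=\Gamma (\alpha +1)$ and the $\alpha =1$ sanity check.
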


\begin{eqnarray}
&&\frac{f(a)+f(b)}{2}-\frac{\Gamma \left( \alpha +1\right) }{2\left(
b-a\right) ^{\alpha }}\left[ J_{a^{+}}^{\alpha }f(b)+J_{b^{-}}^{\alpha }f(a)%
\right]  \label{2.1} \\
&=&\frac{\left( b-a\right) ^{2}}{2\left( \alpha +1\right) }%
\int_{0}^{1}t\left( 1-t^{\alpha }\right) \left[ f^{\prime \prime }\left(
ta+\left( 1-t\right) b\right) +f^{\prime \prime }\left( \left( 1-t\right)
a+tb\right) \right] dt  \notag
\end{eqnarray}%
where $\Gamma (\alpha )=\int_{0}^{\infty }e^{-t}u^{\alpha -1}du.$

\begin{theorem}
\label{16} Let $f:I\subseteq 
\mathbb{R}
\rightarrow 
\mathbb{R}
$ be a twice differentiable function on $I^{\circ }$ and let $a,b\in
I^{\circ }$ with $a<b$ and $f^{^{\prime \prime }}\in L\left[ a,b\right] $.
If $\left\vert f^{\prime \prime }\right\vert $ is $s-$convex in the second
sense on $I$ for some fixed $s\in \left( 0,1\right] $, then the following
inequality for fractional integrals with $\alpha >0$ holds:
\end{theorem}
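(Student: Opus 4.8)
The plan is to invoke the integral identity of Lemma~\ref{L1}, pass to absolute values, and then exploit the $s$-convexity of $\left\vert f^{\prime \prime }\right\vert $ to replace the integrand by an explicitly integrable expression in $t$.

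First I would apply Lemma~\ref{L1} to write the quantity inside the absolute value on the left-hand side as $\frac{\left( b-a\right) ^{2}}{2\left( \alpha +1\right) }\int_{0}^{1}t\left( 1-t^{\alpha }\right) \left[ f^{\prime \prime }\left( ta+\left( 1-t\right) b\right) +f^{\prime \prime }\left( \left( 1-t\right) a+tb\right) \right] dt$. Taking moduli and using the triangle inequality, together with the fact that $t\left( 1-t^{\alpha }\right) \geq 0$ for $t\in \left[ 0,1\right] $ and $\alpha >0$, bounds this by $\frac{\left( b-a\right) ^{2}}{2\left( \alpha +1\right) }\int_{0}^{1}t\left( 1-t^{\alpha }\right) \left[ \left\vert f^{\prime \prime }\left( ta+\left( 1-t\right) b\right) \right\vert +\left\vert f^{\prime \prime }\left( \left( 1-t\right) a+tb\right) \right\vert \right] dt$.

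Next, since $\left\vert f^{\prime \prime }\right\vert \in K_{s}^{2}$, I would estimate $\left\vert f^{\prime \prime }\left( ta+\left( 1-t\right) b\right) \right\vert \leq t^{s}\left\vert f^{\prime \prime }(a)\right\vert +\left( 1-t\right) ^{s}\left\vert f^{\prime \prime }(b)\right\vert $ and, interchanging the roles of $a$ and $b$, $\left\vert f^{\prime \prime }\left( \left( 1-t\right) a+tb\right) \right\vert \leq \left( 1-t\right) ^{s}\left\vert f^{\prime \prime }(a)\right\vert +t^{s}\left\vert f^{\prime \prime }(b)\right\vert $. Adding these two estimates, the bracketed term collapses to $\left( t^{s}+\left( 1-t\right) ^{s}\right) \left( \left\vert f^{\prime \prime }(a)\right\vert +\left\vert f^{\prime \prime }(b)\right\vert \right) $, so the bound becomes $\frac{\left( b-a\right) ^{2}}{2\left( \alpha +1\right) }\left( \left\vert f^{\prime \prime }(a)\right\vert +\left\vert f^{\prime \prime }(b)\right\vert \right) \int_{0}^{1}t\left( 1-t^{\alpha }\right) \left( t^{s}+\left( 1-t\right) ^{s}\right) dt$.

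It then remains to evaluate the scalar integral, which splits as $\int_{0}^{1}t^{s+1}dt-\int_{0}^{1}t^{s+\alpha +1}dt+\int_{0}^{1}t\left( 1-t\right) ^{s}dt-\int_{0}^{1}t^{\alpha +1}\left( 1-t\right) ^{s}dt=\frac{1}{s+2}-\frac{1}{s+\alpha +2}+\beta \left( 2,s+1\right) -\beta \left( \alpha +2,s+1\right) $, where I recognize the last two integrals as Euler Beta functions; one may further simplify using $\frac{1}{s+2}+\beta \left( 2,s+1\right) =\frac{1}{s+1}$. Substituting this back produces the asserted constant multiplying $\left\vert f^{\prime \prime }(a)\right\vert +\left\vert f^{\prime \prime }(b)\right\vert $. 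The computation is entirely routine; the only points needing care are bookkeeping the two symmetric applications of $s$-convexity so that the $\left\vert f^{\prime \prime }(a)\right\vert $ and $\left\vert f^{\prime \prime }(b)\right\vert $ contributions combine into a single symmetric factor, and correctly identifying the Beta-function values so the constant appears in closed form.
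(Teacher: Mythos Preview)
Your argument is correct and mirrors the paper's proof exactly: both start from Lemma~\ref{L1}, take absolute values, apply $s$-convexity to each of the two second-derivative terms, and then evaluate $\int_{0}^{1}t^{s+1}(1-t^{\alpha })\,dt=\frac{\alpha}{(s+2)(\alpha+s+2)}$ and $\int_{0}^{1}t(1-t^{\alpha})(1-t)^{s}\,dt=\beta(2,s+1)-\beta(\alpha+2,s+1)$. Your only cosmetic difference is that you combine the two $s$-convexity bounds into the symmetric factor $(t^{s}+(1-t)^{s})(\left\vert f''(a)\right\vert+\left\vert f''(b)\right\vert)$ before integrating, whereas the paper keeps the two integrals separate and then adds; the computations are identical.
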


\begin{eqnarray}
&&\left\vert \frac{f(a)+f(b)}{2}-\frac{\Gamma \left( \alpha +1\right) }{%
2\left( b-a\right) ^{\alpha }}\left[ J_{a^{+}}^{\alpha
}f(b)+J_{b^{-}}^{\alpha }f(a)\right] \right\vert  \label{2.2} \\
&\leq &\frac{\left( b-a\right) ^{2}}{2\left( \alpha +1\right) }\left[ \frac{%
\alpha }{\left( s+2\right) \left( \alpha +s+2\right) }+\beta \left(
2,s+1\right) -\beta \left( \alpha +2,s+1\right) \right] \left[ \left\vert
f^{\prime \prime }(a)\right\vert +\left\vert f^{\prime \prime
}(b)\right\vert \right]  \notag
\end{eqnarray}%
where $\beta $ is Euler Beta function.

\begin{proof}
From Lemma \ref{L1} since $\left\vert f^{\prime \prime }\right\vert $ is $s-$%
convex in the second sense on $I$, we have

\begin{eqnarray*}
&&\left\vert \frac{f(a)+f(b)}{2}-\frac{\Gamma \left( \alpha +1\right) }{%
2\left( b-a\right) ^{\alpha }}\left[ J_{a^{+}}^{\alpha
}f(b)+J_{b^{-}}^{\alpha }f(a)\right] \right\vert \\
&\leq &\frac{\left( b-a\right) ^{2}}{2\left( \alpha +1\right) }%
\int_{0}^{1}\left\vert t\left( 1-t^{\alpha }\right) \right\vert \left[
\left\vert f^{\prime \prime }\left( ta+\left( 1-t\right) b\right)
\right\vert +\left\vert f^{\prime \prime }\left( \left( 1-t\right)
a+tb\right) \right\vert \right] dt \\
&\leq &\frac{\left( b-a\right) ^{2}}{2\left( \alpha +1\right) }\left[
\int_{0}^{1}t(1-t^{\alpha })\left[ t^{s}\left\vert f^{\prime \prime
}(a)\right\vert +(1-t)^{s}\left\vert f^{\prime \prime }(b)\right\vert \right]
dt+\int_{0}^{1}t(1-t^{\alpha })\left[ (1-t)^{s}\left\vert f^{\prime \prime
}(a)\right\vert +t^{s}\left\vert f^{\prime \prime }(b)\right\vert \right] dt%
\right] \\
&=&\frac{\left( b-a\right) ^{2}}{2\left( \alpha +1\right) }\left[
\int_{0}^{1}t^{s+1}\left( 1-t^{\alpha }\right) dt+\int_{0}^{1}t(1-t^{\alpha
})(1-t)^{s}dt\right] \left[ \left\vert f^{\prime \prime }(a)\right\vert
+\left\vert f^{\prime \prime }(b)\right\vert \right] \\
&=&\frac{\left( b-a\right) ^{2}}{2\left( \alpha +1\right) }\left[ \frac{%
\alpha }{\left( s+2\right) \left( \alpha +s+2\right) }+\beta \left(
2,s+1\right) -\beta \left( \alpha +2,s+1\right) \right] \left[ \left\vert
f^{\prime \prime }(a)\right\vert +\left\vert f^{\prime \prime
}(b)\right\vert \right]
\end{eqnarray*}%
where we used the fact that%
\begin{equation*}
\int_{0}^{1}t^{s+1}\left( 1-t^{\alpha }\right) dt=\frac{\alpha }{\left(
s+2\right) \left( \alpha +s+2\right) }
\end{equation*}
and%
\begin{equation*}
\int_{0}^{1}t(1-t^{\alpha })(1-t)^{s}dt=\beta \left( 2,s+1\right) -\beta
\left( \alpha +2,s+1\right)
\end{equation*}%
which completes the proof.
\end{proof}

\begin{remark}
\label{R1} In Theorem \ref{16} if we choose $s=1$ then (\ref{2.2}) reduces
the ineqality (\ref{1.3}) of Theorem \ref{12}.
\end{remark}

\begin{theorem}
\label{17} Let $f:I\subseteq 
\mathbb{R}
\rightarrow 
\mathbb{R}
$ be a twice differentiable function on $I^{\circ }$. Suppose that $a,b\in
I^{\circ }$ with $a<b$ and $f^{^{\prime \prime }}\in L\left[ a,b\right] $.
If $\left\vert f^{\prime \prime }\right\vert ^{q}$ is $s-$convex in the
second sense on $I$ for some fixed $s\in \left( 0,1\right] $, $p,q>1$ then
the following inequality for fractional integrals holds:
\end{theorem}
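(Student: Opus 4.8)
The plan is to imitate the proof of Theorem \ref{16}, but to replace the direct estimate of $\int_{0}^{1}t(1-t^{\alpha})\left|f''(\cdot)\right|dt$ by a H\"older splitting with conjugate exponents $p$ and $q$ (so $\frac{1}{p}+\frac{1}{q}=1$). Starting from the identity (\ref{2.1}) of Lemma \ref{L1}, I would take absolute values, apply the triangle inequality, and use that $t(1-t^{\alpha})\ge 0$ on $[0,1]$; this bounds the left-hand side of the asserted inequality by
\[
\frac{(b-a)^{2}}{2(\alpha+1)}\left[\int_{0}^{1}t(1-t^{\alpha})\left|f''(ta+(1-t)b)\right|dt+\int_{0}^{1}t(1-t^{\alpha})\left|f''((1-t)a+tb)\right|dt\right].
\]

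To each of the two integrals I would apply H\"older's inequality, keeping the full weight $t(1-t^{\alpha})$ raised to the power $p$ in the first factor:
\[
\int_{0}^{1}t(1-t^{\alpha})\left|f''(ta+(1-t)b)\right|dt\le\left(\int_{0}^{1}\left[t(1-t^{\alpha})\right]^{p}dt\right)^{1/p}\left(\int_{0}^{1}\left|f''(ta+(1-t)b)\right|^{q}dt\right)^{1/q}.
\]
Since $\left|f''\right|^{q}$ is $s$-convex in the second sense, $\left|f''(ta+(1-t)b)\right|^{q}\le t^{s}\left|f''(a)\right|^{q}+(1-t)^{s}\left|f''(b)\right|^{q}$, and integrating term by term with $\int_{0}^{1}t^{s}dt=\int_{0}^{1}(1-t)^{s}dt=\frac{1}{s+1}$ gives $\int_{0}^{1}\left|f''(ta+(1-t)b)\right|^{q}dt\le\frac{\left|f''(a)\right|^{q}+\left|f''(b)\right|^{q}}{s+1}$; the symmetric estimate holds for the other integral. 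Adding the two H\"older bounds produces a factor $2$ which cancels the $\frac{1}{2}$ in front, and inserting the value of the weight integral $\int_{0}^{1}\left[t(1-t^{\alpha})\right]^{p}dt$ completes the proof.

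The only step that is not purely mechanical is the evaluation of $\int_{0}^{1}\left[t(1-t^{\alpha})\right]^{p}dt$: unlike the case $s=1$ treated in Theorem \ref{12}, it is not a single Euler Beta value read off by inspection. The substitution $u=t^{\alpha}$ converts it into $\frac{1}{\alpha}\beta\!\left(\frac{p+1}{\alpha},p+1\right)$, and if one wishes to avoid a fractional Beta argument the elementary inequality $(1-t^{\alpha})^{p}\le 1-t^{\alpha p}$ (valid since $p\ge 1$) yields the closed form $\frac{\alpha p}{(p+1)(\alpha p+p+1)}$. I expect this to be the main obstacle; the rest is exactly the Lemma \ref{L1} $+$ H\"older $+$ $s$-convexity pipeline used in the preceding theorems. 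Finally, in analogy with Remark \ref{R1}, it is worth checking that setting $s=1$ recovers the corresponding estimate (\ref{1.4}) of Theorem \ref{13}.
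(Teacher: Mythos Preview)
Your pipeline---Lemma \ref{L1}, then H\"older with conjugate exponents $p,q$, then $s$-convexity of $|f''|^{q}$ together with $\int_{0}^{1}t^{s}\,dt=\int_{0}^{1}(1-t)^{s}\,dt=\tfrac{1}{s+1}$---is exactly the paper's. The only divergence is in how the weight integral $\int_{0}^{1}t^{p}(1-t^{\alpha})^{p}\,dt$ is dispatched. The paper does not evaluate it exactly, nor does it use your bound $(1-t^{\alpha})^{p}\le 1-t^{\alpha p}$; instead it majorizes $(1-t^{\alpha})^{p}$ by $(1-t)^{\alpha p}$, writing
\[
\int_{0}^{1}t^{p}(1-t^{\alpha})^{p}\,dt \;\le\; \int_{0}^{1}t^{p}(1-t)^{\alpha p}\,dt \;=\; \beta(p+1,\alpha p+1),
\]
and this is precisely where the Beta constant in (\ref{2.3}) comes from. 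Your exact value $\tfrac{1}{\alpha}\beta\!\left(\tfrac{p+1}{\alpha},p+1\right)$ and your elementary upper bound $\tfrac{\alpha p}{(p+1)(\alpha p+p+1)}$ are both correct, but neither reproduces the constant stated in the theorem; to land on (\ref{2.3}) as written you must use the paper's specific majorization. It is worth noting, however, that $(1-t^{\alpha})^{p}\le(1-t)^{\alpha p}$ is valid only for $0<\alpha\le 1$ (for $\alpha>1$ one has $1-t^{\alpha}\ge 1-t\ge(1-t)^{\alpha}$ on $(0,1)$, so the inequality reverses), whereas your two alternatives hold for all $\alpha>0$; thus your route is in fact the sounder one for the full range claimed. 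The $s=1$ check against (\ref{1.4}) is immediate once this step is fixed, exactly as you anticipate.
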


\begin{eqnarray}
&&\left\vert \frac{f(a)+f(b)}{2}-\frac{\Gamma \left( \alpha +1\right) }{%
2\left( b-a\right) ^{\alpha }}\left[ J_{a^{+}}^{\alpha
}f(b)+J_{b^{-}}^{\alpha }f(a)\right] \right\vert  \label{2.3} \\
&\leq &\frac{\left( b-a\right) ^{2}}{\alpha +1}\beta ^{\frac{1}{p}}\left(
p+1,\alpha p+1\right) \left[ \frac{\left\vert f^{\prime \prime
}(a)\right\vert ^{q}+\left\vert f^{\prime \prime }(b)\right\vert ^{q}}{s+1}%
\right] ^{\frac{1}{q}}  \notag
\end{eqnarray}

where $\beta $ is Euler Beta function and $\frac{1}{p}+\frac{1}{q}=1$.

\begin{proof}
From Lemma \ref{L1},using the well known H\"{o}lder inequality and $%
\left\vert f^{\prime \prime }\right\vert ^{q}$ is $s-$convex in the second
sense on $I$, we have%
\begin{eqnarray*}
&&\left\vert \frac{f(a)+f(b)}{2}-\frac{\Gamma \left( \alpha +1\right) }{%
2\left( b-a\right) ^{\alpha }}\left[ J_{a^{+}}^{\alpha
}f(b)+J_{b^{-}}^{\alpha }f(a)\right] \right\vert  \\
&\leq &\frac{\left( b-a\right) ^{2}}{2\left( \alpha +1\right) }%
\int_{0}^{1}\left\vert t\left( 1-t^{\alpha }\right) \right\vert \left[
\left\vert f^{\prime \prime }\left( ta+\left( 1-t\right) b\right)
\right\vert +\left\vert f^{\prime \prime }\left( \left( 1-t\right)
a+tb\right) \right\vert \right] dt \\
&\leq &\frac{\left( b-a\right) ^{2}}{2\left( \alpha +1\right) }\left(
\int_{0}^{1}t^{p}\left( 1-t^{\alpha }\right) ^{p}dt\right) ^{1-\frac{1}{q}}%
\left[ \left( \int_{0}^{1}\left\vert f^{\prime \prime
}(ta+(1-t)b)\right\vert ^{q}dt\right) ^{\frac{1}{q}}+\left(
\int_{0}^{1}\left\vert f^{\prime \prime }((1-t)a+tb)\right\vert
^{q}dt\right) ^{\frac{1}{q}}\right]  \\
&\leq &\frac{\left( b-a\right) ^{2}}{2\left( \alpha +1\right) }\left(
\int_{0}^{1}t^{p}\left( 1-t^{\alpha }\right) ^{p}dt\right) ^{1-\frac{1}{q}}%
\left[ 
\begin{array}{c}
\left( \int_{0}^{1}\left( t^{s}\left\vert f^{\prime \prime }(a)\right\vert
^{q}+(1-t\right) ^{s}\left\vert f^{\prime \prime }(b)\right\vert
^{q})dt\right) ^{\frac{1}{q}} \\ 
+\left( \int_{0}^{1}\left( (1-t)^{s}\left\vert f^{\prime \prime
}(a)\right\vert ^{q}+t^{s}\left\vert f^{\prime \prime }(b)\right\vert
^{q}\right) dt\right) ^{\frac{1}{q}}%
\end{array}%
\right]  \\
&=&\frac{\left( b-a\right) ^{2}}{2\left( \alpha +1\right) }\left(
\int_{0}^{1}t^{p}\left( 1-t^{\alpha }\right) ^{p}dt\right) ^{1-\frac{1}{q}}%
\left[ 
\begin{array}{c}
\left( \left\vert f^{\prime \prime }(a)\right\vert ^{q}\frac{1}{s+1}%
+\left\vert f^{\prime \prime }(b)\right\vert ^{q}\frac{1}{s+1}\right) ^{%
\frac{1}{q}} \\ 
+\left( \left\vert f^{\prime \prime }(a)\right\vert ^{q}\frac{1}{s+1}%
+\left\vert f^{\prime \prime }(b)\right\vert ^{q}\frac{1}{s+1}\right) ^{%
\frac{1}{q}}%
\end{array}%
\right]  \\
&\leq &\frac{\left( b-a\right) ^{2}}{\alpha +1}\beta ^{\frac{1}{p}}\left(
p+1,\alpha p+1\right) \left[ \frac{\left\vert f^{\prime \prime
}(a)\right\vert ^{q}+\left\vert f^{\prime \prime }(b)\right\vert ^{q}}{s+1}%
\right] ^{\frac{1}{q}}
\end{eqnarray*}%
where we used the fact that%
\begin{equation*}
\int_{0}^{1}t^{s}dt=\int_{0}^{1}(1-t)^{s}dt=\frac{1}{s+1}
\end{equation*}%
and%
\begin{equation*}
\int_{0}^{1}t^{p}\left( 1-t^{\alpha }\right) ^{p}dt\leq
\int_{0}^{1}t^{p}(1-t)^{\alpha p}dt=\beta ^{\frac{1}{p}}\left( p+1,\alpha
p+1\right) 
\end{equation*}%
which completes the proof.
\end{proof}

\begin{remark}
\label{R2} In Theorem \ref{17} if we choose $s=1$ then (\ref{2.3}) reduces
the ineqality (\ref{1.4}) of Theorem \ref{13}.
\end{remark}

\begin{theorem}
\label{18} Let $f:I\subseteq 
\mathbb{R}
\rightarrow 
\mathbb{R}
$ be a twice differentiable function on $I^{\circ }.$Suppose that $a,b\in
I^{\circ }$ with $a<b$ and $f^{^{\prime \prime }}\in L\left[ a,b\right] $.
If $\left\vert f^{\prime \prime }\right\vert ^{q}$ is $s-$convex in the
second sense on $I$ for some fixed $s\in \left( 0,1\right] $ and $q\geq 1$
then the following inequality for fractional integrals holds:%
\begin{eqnarray}
&&\left\vert \frac{f(a)+f(b)}{2}-\frac{\Gamma \left( \alpha +1\right) }{%
2\left( b-a\right) ^{\alpha }}\left[ J_{a^{+}}^{\alpha
}f(b)+J_{b^{-}}^{\alpha }f(a)\right] \right\vert  \label{2.4} \\
&\leq &\frac{\alpha \left( b-a\right) ^{2}}{4\left( \alpha +1\right) \left(
\alpha +2\right) }  \notag \\
&&\times \left[ 
\begin{array}{c}
\left( \left\vert f^{\prime \prime }(a)\right\vert ^{q}\frac{2\alpha +4}{%
\left( s+2\right) \left( \alpha +s+2\right) }+\left\vert f^{\prime \prime
}(b)\right\vert ^{q}\left[ \beta \left( 2s+1\right) -\beta \left( \alpha
+2,s+1\right) \right] \frac{2\alpha +4}{\alpha }\right) ^{\frac{1}{q}} \\ 
\left( \left\vert f^{\prime \prime }(a)\right\vert ^{q}\left[ \beta \left(
2s+1\right) -\beta \left( \alpha +2,s+1\right) \right] \frac{2\alpha +4}{%
\alpha }+\left\vert f^{\prime \prime }(b)\right\vert ^{q}\frac{2\alpha +4}{%
\left( s+2\right) \left( \alpha +s+2\right) }\right) ^{\frac{1}{q}}%
\end{array}%
\right]  \notag
\end{eqnarray}
\end{theorem}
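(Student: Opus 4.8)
The plan is to apply Lemma~\ref{L1} to express the left-hand side of \eqref{2.4} as a constant times the integral of $t(1-t^\alpha)$ against $f''(ta+(1-t)b)+f''((1-t)a+tb)$, then take absolute values inside. Since $q\geq 1$, the natural tool is the power-mean (weighted H\"older) inequality with weight $t(1-t^\alpha)\geq 0$ on $[0,1]$: for each of the two terms I would write
\[
\int_0^1 t(1-t^\alpha)\,\bigl|f''(ta+(1-t)b)\bigr|\,dt
\leq \left(\int_0^1 t(1-t^\alpha)\,dt\right)^{1-\frac1q}\!\left(\int_0^1 t(1-t^\alpha)\,\bigl|f''(ta+(1-t)b)\bigr|^q dt\right)^{\frac1q},
\]
and similarly for the term with $(1-t)a+tb$. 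The first factor is elementary: $\int_0^1 t(1-t^\alpha)\,dt=\frac12-\frac1{\alpha+2}=\frac{\alpha}{2(\alpha+2)}$, which (after combining with the $\frac{(b-a)^2}{2(\alpha+1)}$ prefactor and raising to the power $1-\frac1q$) should be reconciled with the stated constant $\frac{\alpha(b-a)^2}{4(\alpha+1)(\alpha+2)}$.

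Next I would invoke the hypothesis that $|f''|^q$ is $s$-convex in the second sense, so that $|f''(ta+(1-t)b)|^q\leq t^s|f''(a)|^q+(1-t)^s|f''(b)|^q$ and $|f''((1-t)a+tb)|^q\leq (1-t)^s|f''(a)|^q+t^s|f''(b)|^q$. Substituting these into the $q$-th power integrals splits them into the two building-block integrals
\[
\int_0^1 t(1-t^\alpha)\,t^s\,dt=\int_0^1 t^{s+1}(1-t^\alpha)\,dt=\frac{\alpha}{(s+2)(\alpha+s+2)},
\qquad
\int_0^1 t(1-t^\alpha)(1-t)^s\,dt=\beta(2,s+1)-\beta(\alpha+2,s+1),
\]
exactly the two identities already established in the proof of Theorem~\ref{16}. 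So the coefficient of $|f''(a)|^q$ in the first bracketed term is $\frac{\alpha}{(s+2)(\alpha+s+2)}$ and the coefficient of $|f''(b)|^q$ is $\beta(2,s+1)-\beta(\alpha+2,s+1)$, and symmetrically for the second term; dividing numerator and denominator appropriately to pull out the normalizing factor $\left(\int_0^1 t(1-t^\alpha)dt\right)^{-1}=\frac{2(\alpha+2)}{\alpha}$ inside the $\frac1q$-power produces the factors $\frac{2\alpha+4}{(s+2)(\alpha+s+2)}$ and $\bigl[\beta(2,s+1)-\beta(\alpha+2,s+1)\bigr]\frac{2\alpha+4}{\alpha}$ appearing in \eqref{2.4}. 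Finally I would collect everything and check the overall constant matches.

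The main obstacle is purely bookkeeping: correctly tracking how the weight-normalization factor $\left(\int_0^1 t(1-t^\alpha)\,dt\right)^{1-\frac1q}$ on the outside and its reciprocal inside the $(\cdot)^{1/q}$ interact, so that the displayed constant $\frac{\alpha(b-a)^2}{4(\alpha+1)(\alpha+2)}$ and the displayed coefficients (with their $\frac{2\alpha+4}{\alpha}$ and $2\alpha+4$ factors) come out consistently. One should also note the apparent typographical slips in the statement — $\beta(2s+1)$ should read $\beta(2,s+1)$, and the second row of the bracket is missing a leading ``$+$'' — which I would silently correct in the write-up. There is no conceptual difficulty beyond this: the argument is the $q\geq 1$ power-mean analogue of Theorem~\ref{16}, structurally identical to the passage from Theorem~\ref{12} to Theorem~\ref{14} in the classical (non-fractional, $s=1$) setting, and no new integral evaluations are required.
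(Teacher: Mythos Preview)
Your proposal is correct and follows essentially the same route as the paper's own proof: Lemma~\ref{L1}, absolute values, the power-mean inequality with weight $t(1-t^{\alpha})$, $s$-convexity of $|f''|^{q}$, and the same two elementary integral evaluations. Your observations about the typographical slips (the missing comma in $\beta(2,s+1)$ and the missing ``$+$'' between the two rows of the bracket) are also accurate.
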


\begin{proof}
From Lemma \ref{L1}, using power mean ineqality and $\left\vert f^{\prime
\prime }\right\vert ^{q}$ is $s-$convex in the second sense on $I$ we have 
\begin{eqnarray*}
&&\left\vert \frac{f(a)+f(b)}{2}-\frac{\Gamma \left( \alpha +1\right) }{%
2\left( b-a\right) ^{\alpha }}\left[ J_{a^{+}}^{\alpha
}f(b)+J_{b^{-}}^{\alpha }f(a)\right] \right\vert  \\
&\leq &\frac{\left( b-a\right) ^{2}}{2\left( \alpha +1\right) }%
\int_{0}^{1}\left\vert t\left( 1-t^{\alpha }\right) \right\vert \left[
\left\vert f^{\prime \prime }\left( ta+\left( 1-t\right) b\right)
\right\vert +\left\vert f^{\prime \prime }\left( \left( 1-t\right)
a+tb\right) \right\vert \right] dt \\
&\leq &\frac{\left( b-a\right) ^{2}}{2\left( \alpha +1\right) }\left(
\int_{0}^{1}t\left( 1-t^{\alpha }\right) dt\right) ^{1-\frac{1}{q}}\left[ 
\begin{array}{c}
\left( \int_{0}^{1}t(1-t^{\alpha })\left\vert f^{\prime \prime
}(ta+(1-t)b)\right\vert ^{q}dt\right) ^{\frac{1}{q}} \\ 
+\left( \int_{0}^{1}t(1-t^{\alpha })\left\vert f^{\prime \prime
}((1-t)a+tb)\right\vert ^{q}dt\right) ^{\frac{1}{q}}%
\end{array}%
\right]  \\
&\leq &\frac{\left( b-a\right) ^{2}}{2\left( \alpha +1\right) }\left(
\int_{0}^{1}t\left( 1-t^{\alpha }\right) dt\right) ^{1-\frac{1}{q}}\left[ 
\begin{array}{c}
\left( \int_{0}^{1}\left[ t^{s+1}\left( 1-t^{\alpha }\right) \left\vert
f^{\prime \prime }(a)\right\vert ^{q}+t\left( 1-t^{\alpha }\right) \left(
1-t\right) ^{s}\left\vert f^{\prime \prime }(b)\right\vert ^{q}\right]
dt\right) ^{\frac{1}{q}} \\ 
+\left( \int_{0}^{1}t\left( 1-t^{\alpha }\right) \left( 1-t\right)
^{s}\left\vert f^{\prime \prime }(a)\right\vert ^{q}+t^{s+1}\left(
1-t^{\alpha }\right) \left\vert f^{\prime \prime }(b)\right\vert
^{q}dt\right) ^{\frac{1}{q}}%
\end{array}%
\right]  \\
&=&\frac{\left( b-a\right) ^{2}}{2\left( \alpha +1\right) }\left( \frac{%
\alpha }{2\left( \alpha +2\right) }\right) ^{1-\frac{1}{q}} \\
&&\times \left[ 
\begin{array}{c}
\left( \left\vert f^{\prime \prime }(a)\right\vert ^{q}\frac{\alpha }{%
(s+2)(\alpha +s+2)}+\left\vert f^{\prime \prime }(b)\right\vert ^{q}\left[
\beta \left( 2s+1\right) -\beta \left( \alpha +2,s+1\right) \right] \right)
^{\frac{1}{q}} \\ 
+\left( \left\vert f^{\prime \prime }(a)\right\vert ^{q}\left[ \beta \left(
2s+1\right) -\beta \left( \alpha +2,s+1\right) \right] +\left\vert f^{\prime
\prime }(b)\right\vert ^{q}+\left\vert f^{\prime \prime }(b)\right\vert ^{q}%
\frac{\alpha }{(s+2)(\alpha +s+2)}\right) ^{\frac{1}{q}}%
\end{array}%
\right]  \\
&\leq &\frac{\alpha \left( b-a\right) ^{2}}{4\left( \alpha +1\right) \left(
\alpha +2\right) } \\
&&\times \left[ 
\begin{array}{c}
\left( \left\vert f^{\prime \prime }(a)\right\vert ^{q}\frac{2\alpha +4}{%
\left( s+2\right) \left( \alpha +s+2\right) }+\left\vert f^{\prime \prime
}(b)\right\vert ^{q}\left[ \beta \left( 2s+1\right) -\beta \left( \alpha
+2,s+1\right) \right] \frac{2\alpha +4}{\alpha }\right) ^{\frac{1}{q}} \\ 
\left( \left\vert f^{\prime \prime }(a)\right\vert ^{q}\left[ \beta \left(
2s+1\right) -\beta \left( \alpha +2,s+1\right) \right] \frac{2\alpha +4}{%
\alpha }+\left\vert f^{\prime \prime }(b)\right\vert ^{q}\frac{2\alpha +4}{%
\left( s+2\right) \left( \alpha +s+2\right) }\right) ^{\frac{1}{q}}%
\end{array}%
\right] 
\end{eqnarray*}%
where we used the fact that 
\begin{equation*}
\int_{0}^{1}t^{s+1}\left( 1-t^{\alpha }\right) dt=\frac{\alpha }{%
(s+2)(\alpha +s+2)}
\end{equation*}%
and%
\begin{equation*}
\int_{0}^{1}t\left( 1-t^{\alpha }\right) \left( 1-t\right) ^{s}dt=\beta
\left( 2s+1\right) -\beta \left( \alpha +2,s+1\right) 
\end{equation*}%
which completes the proof.
\end{proof}

\begin{remark}
\label{R3} In Theorem (\ref{18}) if we choose $s=1$ then(\ref{2.4}) reduces
the ineqality (\ref{1.5}) of Theorem (\ref{14}).
\end{remark}

The following result holds for $s-$concavity.

\begin{theorem}
\label{19} Let $f:I\subseteq 
\mathbb{R}
\rightarrow 
\mathbb{R}
$ be a twice differentiable function on $I^{\circ }.$Suppose that $a,b\in
I^{\circ }$ with $a<b$ and $f^{^{\prime \prime }}\in L\left[ a,b\right] $.
If $\left\vert f^{\prime \prime }\right\vert ^{q}$ is $s-$concave in the
second sense on $I$ for some fixed $s\in \left( 0,1\right] $ and $p,q>1$
then the following inequality for fractional integrals holds:%
\begin{eqnarray}
&&\left\vert \frac{f(a)+f(b)}{2}-\frac{\Gamma \left( \alpha +1\right) }{%
2\left( b-a\right) ^{\alpha }}\left[ J_{a^{+}}^{\alpha
}f(b)+J_{b^{-}}^{\alpha }f(a)\right] \right\vert  \label{2.5} \\
&\leq &\frac{\left( b-a\right) ^{2}}{\alpha +1}\beta ^{\frac{1}{p}}\left(
p+1,\alpha p+1\right) 2^{\frac{s-1}{q}}\left\vert f^{\prime \prime }\left( 
\frac{a+b}{2}\right) \right\vert  \notag
\end{eqnarray}%
where $\frac{1}{p}+\frac{1}{q}=1$ and $\beta $ is $\beta $ is Euler Beta
function
\end{theorem}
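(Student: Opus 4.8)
The plan is to run the H\"older-inequality argument from the proof of Theorem \ref{17}, but to replace the $s$-convexity estimate of the $f^{\prime\prime}$-integrals by the reverse Hermite--Hadamard (midpoint) estimate that $s$-concavity provides. First I would start from the identity of Lemma \ref{L1}, take absolute values, and use the triangle inequality together with $|t(1-t^{\alpha})|=t(1-t^{\alpha})$ on $[0,1]$ to get
\[
\left\vert \frac{f(a)+f(b)}{2}-\frac{\Gamma(\alpha+1)}{2(b-a)^{\alpha}}\bigl[J_{a^{+}}^{\alpha}f(b)+J_{b^{-}}^{\alpha}f(a)\bigr]\right\vert \leq \frac{(b-a)^{2}}{2(\alpha+1)}\int_{0}^{1}t(1-t^{\alpha})\bigl[\,|f^{\prime\prime}(ta+(1-t)b)|+|f^{\prime\prime}((1-t)a+tb)|\,\bigr]dt .
\]
Then I apply H\"older's inequality with exponents $p$ and $q$ to each of the two integrals on the right, factoring out the common weight term $\bigl(\int_{0}^{1}t^{p}(1-t^{\alpha})^{p}dt\bigr)^{1/p}$.

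For the weight term I use the same elementary bound as in the proof of Theorem \ref{17}: since $1-t^{\alpha}\leq(1-t)^{\alpha}$ for $t\in[0,1]$, one has $\int_{0}^{1}t^{p}(1-t^{\alpha})^{p}dt\leq\int_{0}^{1}t^{p}(1-t)^{\alpha p}dt=\beta(p+1,\alpha p+1)$, so this factor is at most $\beta^{1/p}(p+1,\alpha p+1)$.

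For the two factors involving $f^{\prime\prime}$, the idea is to exploit $s$-concavity. The change of variables $x=ta+(1-t)b$ (together with $t\mapsto 1-t$ for the second integral) shows that both $\int_{0}^{1}|f^{\prime\prime}(ta+(1-t)b)|^{q}dt$ and $\int_{0}^{1}|f^{\prime\prime}((1-t)a+tb)|^{q}dt$ equal $\frac{1}{b-a}\int_{a}^{b}|f^{\prime\prime}(x)|^{q}dx$. Writing $\frac{a+b}{2}=\frac{1}{2}\bigl(ta+(1-t)b\bigr)+\frac{1}{2}\bigl((1-t)a+tb\bigr)$, applying the $s$-concavity of $|f^{\prime\prime}|^{q}$ with $\lambda=\frac{1}{2}$, and integrating over $t\in[0,1]$ then yields the reverse of the left inequality in Theorem \ref{11},
\[
\frac{1}{b-a}\int_{a}^{b}|f^{\prime\prime}(x)|^{q}dx\leq 2^{s-1}\left\vert f^{\prime\prime}\!\left(\frac{a+b}{2}\right)\right\vert^{q}.
\]
Taking $q$-th roots converts each of the two $f^{\prime\prime}$-factors into $2^{(s-1)/q}\bigl|f^{\prime\prime}(\frac{a+b}{2})\bigr|$; adding the two equal contributions and absorbing the factor $2$ into $\frac{1}{2(\alpha+1)}$ produces exactly the right-hand side of (\ref{2.5}).

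The Beta-function evaluation and the H\"older bookkeeping are routine and essentially already appear in Theorems \ref{17} and \ref{15}. The one step that deserves care, and the place where the constant $2^{(s-1)/q}$ is born, is the reverse midpoint inequality for $s$-concave functions displayed above; it follows directly from the definition of $s$-concavity in the second sense, i.e.\ from Theorem \ref{11} with the inequalities reversed. As a consistency check, putting $s=1$ collapses $2^{(s-1)/q}$ to $1$ and recovers Theorem \ref{15}.
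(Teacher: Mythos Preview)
Your argument is essentially the paper's own proof: start from Lemma~\ref{L1}, apply H\"older with exponents $p,q$, bound $\int_{0}^{1}t^{p}(1-t^{\alpha})^{p}\,dt$ by $\beta(p+1,\alpha p+1)$, and then replace the two $q$-integrals of $|f''|^{q}$ by $2^{s-1}\bigl|f''(\tfrac{a+b}{2})\bigr|^{q}$ via the reversed inequality~(\ref{1.2}) for $s$-concave functions. The only remark is that your pointwise justification $1-t^{\alpha}\le(1-t)^{\alpha}$ is valid only for $0<\alpha\le 1$; the paper simply asserts the corresponding integral bound without this restriction, so your write-up matches the paper's level of rigor here.
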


\begin{proof}
From Lemma \ref{L1} and using the H\"{o}lder ineqality we have 
\begin{eqnarray}
&&  \label{2.6} \\
&&\left\vert \frac{f(a)+f(b)}{2}-\frac{\Gamma \left( \alpha +1\right) }{%
2\left( b-a\right) ^{\alpha }}\left[ J_{a^{+}}^{\alpha
}f(b)+J_{b^{-}}^{\alpha }f(a)\right] \right\vert   \notag \\
&\leq &\frac{\left( b-a\right) ^{2}}{2\left( \alpha +1\right) }%
\int_{0}^{1}\left\vert t\left( 1-t^{\alpha }\right) \right\vert \left[
\left\vert f^{\prime \prime }\left( ta+\left( 1-t\right) b\right)
\right\vert +\left\vert f^{\prime \prime }\left( \left( 1-t\right)
a+tb\right) \right\vert \right] dt  \notag \\
&\leq &\frac{\left( b-a\right) ^{2}}{2\left( \alpha +1\right) }\left(
\int_{0}^{1}t^{p}\left( 1-t^{\alpha }\right) ^{p}dt\right) ^{\frac{1}{p}}%
\left[ \left( \int_{0}^{1}\left\vert f^{\prime \prime
}(ta+(1-t)b)\right\vert ^{q}dt\right) ^{\frac{1}{q}}+\left(
\int_{0}^{1}\left\vert f^{\prime \prime }((1-t)a+tb)\right\vert
^{q}dt\right) ^{\frac{1}{q}}\right]   \notag \\
&\leq &\frac{\left( b-a\right) ^{2}}{2\left( \alpha +1\right) }\left(
\int_{0}^{1}t^{p}\left( 1-t^{\alpha }\right) ^{p}dt\right) ^{\frac{1}{p}} 
\notag \\
&&\times \left[ 
\begin{array}{c}
\left( \int_{0}^{1}\left( t^{s}\left\vert f^{\prime \prime }(a)\right\vert
^{q}+(1-t\right) ^{s}\left\vert f^{\prime \prime }(b)\right\vert
^{q})dt\right) ^{\frac{1}{q}} \\ 
+\left( \int_{0}^{1}\left( (1-t)^{s}\left\vert f^{\prime \prime
}(a)\right\vert ^{q}+t^{s}\left\vert f^{\prime \prime }(b)\right\vert
^{q}\right) dt\right) ^{\frac{1}{q}}%
\end{array}%
\right]   \notag
\end{eqnarray}%
Since $\left\vert f^{\prime \prime }\right\vert ^{q}$ is $s-$concave using
ineqality (\ref{1.2}) we get (see \cite{A3})%
\begin{equation}
\int_{0}^{1}\left\vert f^{\prime \prime }(ta+(1-t)b)\right\vert ^{q}dt\leq
2^{s-1}\left\vert f^{\prime \prime }\left( \frac{a+b}{2}\right) \right\vert
^{q}  \label{2.7}
\end{equation}%
and%
\begin{equation}
\int_{0}^{1}\left\vert f^{\prime \prime }((1-t)a+tb)\right\vert ^{q}dt\leq
2^{s-1}\left\vert f^{\prime \prime }\left( \frac{b+a}{2}\right) \right\vert
^{q}  \label{2.8}
\end{equation}%
Using (\ref{2.7}) and (\ref{2.8}) in (\ref{2.6}), we have%
\begin{eqnarray*}
&&\left\vert \frac{f(a)+f(b)}{2}-\frac{\Gamma \left( \alpha +1\right) }{%
2\left( b-a\right) ^{\alpha }}\left[ J_{a^{+}}^{\alpha
}f(b)+J_{b^{-}}^{\alpha }f(a)\right] \right\vert  \\
&\leq &\frac{\left( b-a\right) ^{2}}{\alpha +1}\beta ^{\frac{1}{p}}\left(
p+1,\alpha p+1\right) 2^{\frac{s-1}{q}}\left\vert f^{\prime \prime }\left( 
\frac{a+b}{2}\right) \right\vert 
\end{eqnarray*}%
which completes the proof.
\end{proof}

\begin{remark}
\label{r4} In theorem (\ref{19}) if we choose $s=1$ then (\ref{2.5}) reduces
ineqality (\ref{1.6}) of theorem \ref{15}.
\end{remark}

\end{document}